\numberwithin{equation}{section}
\newtheorem{Theorem}{Theorem}[section]
\newtheorem{Lemma}[Theorem]{Lemma}
 { \theoremstyle{definition}
\newtheorem{Example}[Theorem]{Example}
\newtheorem{Remark}[Theorem]{Remark} }
\newcommand{\cch}{\Omega}
\newcommand{\oot}{\bar\otimes}
\newcommand{\C}{{\mathcal C}}
\newcommand{\D}{{\mathcal D}}
\newcommand{\M}{{\mathcal M}}
\newcommand{\1}{\textbf{1}}
\newcommand\id{\operatorname{id}}
\newcommand\ad{\operatorname{ad}}
\newcommand\Fun{\operatorname{Fun}}
\newcommand\FPdim{\operatorname{FPdim}}
\begin{document}


\newcommand{\arXivNumber}{1608.04435}

\renewcommand{\PaperNumber}{042}

\FirstPageHeading

\ShortArticleName{On the Equivalence of Module Categories over a Group-Theoretical Fusion Category}

\ArticleName{On the Equivalence of Module Categories\\ over a Group-Theoretical Fusion Category}

\Author{Sonia NATALE}

\AuthorNameForHeading{S.~Natale}

\Address{Facultad de Matem\'atica, Astronom\'{\i}a, F\'{\i}sica y Computaci\'on, Universidad Nacional de C\'ordoba,\\ CIEM-CONICET, C\'ordoba, Argentina}
\Email{\href{mailto:natale@famaf.unc.edu.ar}{natale@famaf.unc.edu.ar}}
\URLaddress{\url{http://www.famaf.unc.edu.ar/~natale/}}

\ArticleDates{Received April 28, 2017, in f\/inal form June 14, 2017; Published online June 17, 2017}

\Abstract{We give a necessary and suf\/f\/icient condition in terms of group cohomology for two indecomposable module categories over a group-theoretical fusion category ${\mathcal C}$ to be equivalent. This concludes the classif\/ication of such module categories.}

\Keywords{fusion category; module category; group-theoretical fusion category}

\Classification{18D10; 16T05}

\section{Introduction}

Throughout this paper we shall work over an algebraically closed f\/ield $k$ of characteristic zero. Let $\mathcal{C}$ be a fusion category over~$k$. The notion of a $\C$-module category provides a natural categorif\/ication of the notion of representation of a group. The problem of classifying module categories plays a fundamental role in the theory of tensor categories.

Two fusion categories $\C$ and $\D$ are called \emph{categorically Morita equivalent} if there exists an indecomposable $\mathcal{C}$-module category $\mathcal{M}$ such that $\mathcal{D}^{\rm op}$ is equivalent as a fusion category to the category $\operatorname{Fun}_\C(\M, \M)$ of $\C$-module endofunctors of~$\M$. This def\/ines an equivalence relation in the class of all fusion categories.

Recall that a fusion category $\C$ is called \emph{pointed} if every simple object of $\C$ is invertible. A~basic class of fusion categories consists of those which are categorically Morita equivalent to a pointed fusion category; a fusion category in this class is called \emph{group-theoretical}. Group-theoretical fusion categories can be described in terms of f\/inite groups and their cohomology.

The purpose of this note is to give a necessary and suf\/f\/icient condition in terms of group cohomology for two indecomposable module categories over a group-theoretical fusion category to be equivalent. For this, it is enough to solve the same problem for indecomposable module categories over pointed fusion categories.

Let $\C$ be a pointed fusion category. Then there exist a f\/inite group $G$ and a~3-cocycle~$\omega$ on~$G$ such that $\C \cong \C(G, \omega)$, where $\C(G, \omega)$ is the category of f\/inite-dimensional $G$-graded vector spaces with associativity constraint def\/ined by $\omega$ (see Section~\ref{cgomega} for a precise def\/inition). Let~$\M$ be an indecomposable right $\C$-module category. Then there exists a~subgroup~$H$ of~$G$ and a~2-cochain $\psi \in C^2(H, k^{\times})$ satisfying
\begin{gather}
\label{cond-alfa}d\psi = \omega\vert_{H \times H \times H},
\end{gather}
such that $\M$ is equivalent as a $\C$-module category to the category $\M_0(H, \psi)$ of left $A(H, \psi)$-modules in $\C$, where $A(H, \psi) = k_\psi H$ is the group algebra of $H$ with multiplication twisted by~$\psi$~\cite{ostrik}, \cite[Example~9.7.2]{egno}.

The main result of this paper is the following theorem.

\begin{Theorem}\label{main} Let $H, L$ be subgroups of $G$ and let $\psi \in C^2(H, k^{\times})$ and $\xi \in C^2(L, k^{\times})$ be $2$-cochains satisfying condition~\eqref{cond-alfa}. Then $\M_0(H, \psi)$ and $\M_0(L, \xi)$ are equivalent as $\C$-module categories if and only if there exists an element $g \in G$ such that $H = {}^gL$ and the class of the $2$-cocycle
\begin{gather}\label{cond-equiv} {\xi}^{-1}{\psi}^g \cch_g\vert_{L\times L}
\end{gather}
is trivial in $H^2(L, k^{\times})$.
\end{Theorem}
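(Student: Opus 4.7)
The plan is to translate the equivalence of right $\C$-module categories $\M_0(H, \psi) \simeq \M_0(L, \xi)$ into a Morita equivalence between the algebras $A(H, \psi)$ and $A(L, \xi)$ in $\C = \C(G, \omega)$. Since $\M_0(H, \psi) \simeq {}_{A(H, \psi)}\C$ as right $\C$-module categories, and analogously for $L$, the two module categories are equivalent if and only if there exists an invertible $(A(H, \psi), A(L, \xi))$-bimodule $M$ in $\C$. The proof thus reduces to classifying such invertible bimodules in group-theoretic terms.

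Any bimodule $M$ in $\C(G, \omega)$ inherits a $G$-grading $M = \bigoplus_{x \in G} M_x$, and the left and right actions force the support of $M$ to be a union of $(H, L)$-double cosets in $G$. If $M$ is invertible, it is indecomposable, so its support is a single double coset $HgL$, and $\dim M_x$ is independent of $x$ in this coset. Invertibility gives $\FPdim A(H, \psi) = \FPdim A(L, \xi)$, i.e., $|H| = |L|$, together with $\FPdim(M) = |H|$. Combining these with the identity $|HgL| = |H|\cdot |L| / |H \cap {}^gL|$, a short count forces $\dim M_x = 1$ for every $x$ in the support and $H = {}^gL$.

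Having fixed such a $g$, identify $M$ with $\bigoplus_{x \in gL} k\delta_x$ and parametrize the possible bimodule structures by the relative scalars, encoded by a 1-cochain $\eta\colon L \to k^{\times}$. Writing down associativity of the two actions and their compatibility with the associator $\omega$, after rearranging the factors $\delta_g$ past elements of $L$, one finds that all the required diagrams reduce to the single equation $\dd \eta = \xi^{-1} \psi^g \cch_g\vert_{L\times L}$ in $C^2(L, k^{\times})$. Hence an invertible bimodule exists if and only if $H = {}^gL$ for some $g \in G$ and the class of $\xi^{-1} \psi^g \cch_g\vert_{L\times L}$ is trivial in $H^2(L, k^{\times})$, which is exactly the assertion of the theorem. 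Conversely, given this data, the bimodule so constructed has the right Frobenius--Perron dimension and an explicit dual with support on $g^{-1}H$, which verifies invertibility directly.

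The main technical obstacle is the cocycle bookkeeping in the last step: three 2-cocycles ($\omega$, $\psi$, $\xi$) and the 1-cochain $\eta$ must be reconciled, and the correct placement of the $\cch_g$ factor (rather than any merely cohomologous substitute) must be read off from the pentagon diagrams that intertwine the left and right actions with the associator. The definition of $\cch_g$ as the twist of $\omega$ induced by conjugation by $g$, introduced earlier in the paper, is tailored precisely to make this identification come out cleanly on the nose.
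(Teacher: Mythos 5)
Your proposal is correct in outline, and its necessity argument coincides with the paper's: both pass to the invertible $(A(H, \psi), A(L, \xi))$-bimodule $B$ corresponding to the equivalence, invoke Ostrik's result that a simple bimodule is supported on a single double coset $HgL$, and run the same Frobenius--Perron dimension count to force $\dim B_g = 1$ and $H = {}^gL$. Two caveats on that count. First, the equality $|H| = |L|$ does not follow from invertibility of the bimodule alone (in $\vect$ the algebras $k$ and $M_2(k)$ are Morita equivalent with different dimensions); the paper obtains it from the equality of ranks $[G:H] = [G:L]$ of the two module categories, and your count needs that input to close. Second, $\FPdim B = |H|$ is likewise not immediate from invertibility: the paper derives it from the two inequalities $\dim B \geq |L|$ and $\dim B' \geq |H|$ combined with $\dim B \dim B' = |H|\,|L|$, and your ``short count'' should be understood as that squeeze.

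Where you genuinely diverge is in the sufficiency direction and in how the obstruction class is identified. The paper never constructs the invertible bimodule explicitly: it shows that the adjoint action of $G$ on equivalence classes of $\C$-module categories is trivial, $\M^g \cong \M$ via $M \mapsto M \oot \{g\}$ (Lemma~\ref{adj-triv}), computes ${}^gA(L, \xi) \cong A\big({}^gL, \xi^{g^{-1}}\cch_{g^{-1}}\big)$ (Lemma~\ref{gdea}), and deduces the equivalence from an isomorphism of algebras in $\C$. You instead build the bimodule $\bigoplus_{x \in gL} k\delta_x$ directly and assert that the bimodule axioms reduce to the single equation $\dd\eta = \xi^{-1}\psi^g\cch_g\vert_{L\times L}$. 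That assertion is precisely the content of the paper's Lemmas~\ref{alfa-g} and~\ref{rel-cociclos} (the projective representation on $B_g$ has cocycle $\alpha_g$, and $\alpha_g^g \equiv \xi^{-1}\psi^g\cch_g$), and it is the technical heart of the theorem; you correctly flag it as the main obstacle but do not carry it out, so as written the argument has a gap exactly where the paper does its real work. Your route has the virtue of being self-contained --- one biconditional about invertible bimodules covers both implications --- at the price of having to verify invertibility of the constructed bimodule by hand; the paper's route gets sufficiency essentially for free from the triviality of the adjoint action, and quarantines the cocycle bookkeeping in two separately stated lemmas.
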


Here we use the notation ${}^g x = gxg^{-1}$ and $^gL = \{{}^g x\colon x\in L\}$. The 2-cochain ${\psi}^g \in C^2(L, k^{\times})$ is def\/ined by ${\psi}^g(g_1, g_2) = {\psi}({}^gg_1, {}^gg_2)$, for all $g_1, g_2 \in L$, and $\cch_g\colon G \times G \to k^{\times}$ is given by
\begin{gather*}\cch_g(g_1, g_2) = \frac{\omega({}^gg_1, {}^gg_2, g) \omega(g, g_1, g_2)}{\omega({}^gg_1, g, g_2)}.
\end{gather*}

Observe that \cite[Theorem 3.1]{ostrik} states that the indecomposable module categories considered in Theorem~\ref{main} are parameterized by conjugacy classes of pairs $(H, \psi)$. However, this conjugation relation is not described loc.\ cit.\ (compare also with \cite{nikshych} and \cite[Section~9.7]{egno}).

\looseness=-1 Consider for instance the case where $\C$ is the category of f\/inite-dimensional representations of the 8-dimensional Kac Paljutkin Hopf algebra. Then $\C$ is group-theoretical. In fact, $\C \cong \C(G, \omega, C, 1)$, where $G \cong D_8$ is a semidirect product of the group $L = \mathbb Z_2 \times \mathbb Z_2$ by $C = \mathbb Z_2$ and~$\omega$ is a certain (nontrivial) 3-cocycle on $G$~\cite{schauenburg}. Let $\xi$ represent a nontrivial cohomology class in $H^2(L, k^\times)$. According to the usual conjugation relation among pairs $(L, \psi)$, the result in \cite[Theorem~3.1]{ostrik} would imply that the pairs $(L, 1)$ and $(L, \xi)$, not being conjugated under the adjoint action of $G$, give rise to two inequivalent $\C$-module categories. These module categories both have rank one, whence they give rise to non-isomorphic f\/iber functors on $\C$. However, it follows from \cite[Theorem~4.8(1)]{ma-contemp} that the category $\C$ has a unique f\/iber functor up to isomorphism. In fact, in this example there exists $g \in G$ such that $\cch_g\vert_{L\times L}$ is a 2-cocycle cohomologous to~$\xi$. See Example~\ref{kp}.

Certainly, the condition given in Theorem \ref{main} and the usual conjugacy relation agree in the case where the 3-cocycle $\omega$ is trivial, and it reduces to the conjugation relation among subgroups when they happen to be cyclic.

As explained in Section~\ref{adj-action}, condition~\eqref{cond-equiv} is equivalent to the condition that $A(L, \xi)$ and ${}^gA(H, \psi)$ be isomorphic as algebras in~$\C$, where $\underline{G} \to \underline{\text{Aut}}_\otimes\C$, $g \mapsto {}^g( \ )$, is the adjoint action of~$G$ on~$\C$ (see Lemma~\ref{gdea}).

Theorem \ref{main} can be reformulated as follows.

\begin{Theorem} Two $\C$-module categories $\M_0(H, \psi)$ and $\M_0(L, \xi)$ are equivalent if and only if the algebras $A(H, \psi)$ and $A(L, \xi)$ are conjugated under the adjoint action of $G$ on $\C$.
\end{Theorem}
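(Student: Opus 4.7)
The plan is to deduce the reformulation directly from Theorem~\ref{main} by translating the cohomological condition~\eqref{cond-equiv} into an isomorphism of algebras in~$\C$. By Theorem~\ref{main}, $\M_0(H,\psi)$ and $\M_0(L,\xi)$ are equivalent as $\C$-module categories precisely when some $g \in G$ satisfies $H = {}^g L$ together with triviality of the class of $\xi^{-1}\psi^g \cch_g\vert_{L\times L}$ in $H^2(L, k^\times)$. Hence it suffices to show that this condition is equivalent to the existence of some $g \in G$ for which $A(L, \xi) \cong {}^g A(H, \psi)$ as algebras in~$\C$.

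For this, I would first spell out the adjoint action of $G$ on $\C = \C(G, \omega)$ provided by Lemma~\ref{gdea}: for each $g \in G$, the tensor autoequivalence ${}^g(-)$ permutes the invertible simple objects by $\delta_x \mapsto \delta_{{}^g x}$, and its tensor constraint is the scalar $2$-cochain $\cch_g$ whose explicit formula is recorded in the excerpt. A direct computation, composing this tensor constraint with the transported multiplication of $A(H, \psi)$, then shows that ${}^g A(H, \psi)$ is isomorphic, as an algebra in $\C$, to the twisted group algebra of the subgroup $L$ (associated to $g$ via $H = {}^g L$) equipped with the cocycle $\psi^g \cch_g\vert_{L\times L}$, up to explicit coboundary.

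Next I would invoke the elementary fact that two twisted group algebras $A(L, \xi)$ and $A(L, \zeta)$ on the same subgroup $L \subseteq G$ are isomorphic as algebras in $\C$ if and only if $\xi$ and $\zeta$ represent the same class in $H^2(L, k^\times)$; this identification is clean because both algebras live inside the pointed full subcategory of $\C$ spanned by $\delta_\ell$, $\ell \in L$, whose associativity is governed by $\omega\vert_{L\times L\times L} = d\psi = d\xi$, so only coboundaries distinguish them. Combining the two steps, $A(L,\xi) \cong {}^g A(H, \psi)$ in $\C$ if and only if the class of $\xi^{-1} \psi^g \cch_g\vert_{L\times L}$ vanishes in $H^2(L, k^\times)$, which is condition~\eqref{cond-equiv}.

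The main obstacle I anticipate lies in the second paragraph: one must unfold the tensor constraint of ${}^g(-)$ and track several coboundary corrections originating from the $3$-cocycle $\omega$ in order to rewrite the transported algebra in clean twisted group-algebra form. This is precisely where the explicit formula for $\cch_g$ given in the excerpt is required, and is the only point in the argument at which a genuine $3$-cocycle manipulation occurs — everything else follows formally from Theorem~\ref{main} together with the coboundary-freeness statement about twisted group algebras over a fixed subgroup.
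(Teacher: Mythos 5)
Your proposal is correct and follows essentially the same route as the paper: the reformulation is obtained from Theorem~\ref{main} by observing, via Lemma~\ref{gdea} (which packages exactly the $3$-cocycle/coboundary bookkeeping you flag as the main obstacle, using relation~\eqref{rel-omega}), that ${}^{g^{-1}}A(H,\psi)$ is a twisted group algebra on $L$ with cocycle cohomologous to $\psi^{g}\cch_{g}\vert_{L\times L}$, and that twisted group algebras on a fixed subgroup are isomorphic in $\C$ precisely when their cocycles differ by a coboundary. The only caveat is the $g$ versus $g^{-1}$ bookkeeping in the paper's conventions for ${}^gV$ and $f^g$, which does not affect the existential statement.
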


Theorem \ref{main} is proved in Section~\ref{demo}. Our proof relies on the fact that, as happens with group actions on vector spaces, the adjoint action of the group $G$ in the set of equivalence classes of $\C$-module categories is trivial (Lemma~\ref{adj-triv}). In the course of the proof we establish a relation between the 2-cocycle in~\eqref{cond-equiv} and a 2-cocycle attached to $g$, $\psi$ and $\xi$ in~\cite{ostrik} (Remark~\ref{rmk-alfag} and Lemma~\ref{rel-cociclos}).

We refer the reader to~\cite{egno} for the main notions on fusion categories and their module categories used throughout.

\section{Preliminaries and notation}\label{prels}

\subsection{}
Let $\mathcal{C}$ be a fusion category over $k$. A (\emph{right}) $\C$-\emph{module category} is a f\/inite semisimple $k$-linear abelian category $\mathcal{M}$ equipped with a bifunctor $\oot\colon \mathcal{M}\times \mathcal{C} \rightarrow\mathcal{M}$ and natural isomorphisms
\begin{gather*}\mu_{M, X,Y}\colon \ M \oot (X\otimes Y)\rightarrow (M\oot X)\oot Y,\qquad r_M\colon \ M \oot \textbf{1}\rightarrow M, \end{gather*}
$X, Y \in \C$, $M \in \mathcal M$, satisfying the following conditions:
\begin{gather} \label{modcat1} \mu_{M\oot X, Y, Z} \mu_{M, X, Y \otimes Z} ( \id_M \oot a_{X, Y, Z} ) =
 (\mu_{M, X, Y} \otimes \id_Z ) \mu_{M, X\otimes Y, Z},\\
 \label{modcat2} (r_M \otimes \id_Y ) \mu_{M, \1, Y} = \id_M \oot l_Y,
\end{gather}
for all $M \in \M$, $X, Y \in \C$, where $a\colon \otimes \circ (\otimes \times \id_\C) \to \otimes \circ (\id_\C \times \otimes)$ and $l\colon \textbf{1} \otimes ? \to \id_\C$, denote the associativity and left unit constraints in $\C$, respectively.

Let $A$ be an algebra in $\C$. Then the category $_A\C$ of left $A$-modules in $\C$ is a right $\C$-module category with action $\oot\colon _A\C \times \C \to \C_A$, given by $M\oot X = M\otimes X$ endowed with the left $A$-module structure $(m_M \otimes \id_X) a_{A, M, X}^{-1}\colon A\otimes (M\otimes X) \to M\otimes X$, where $m_M\colon A\otimes M \to M$ is the $A$-module structure in $M$. The associativity constraint of $_A\C$ is given by $a^{-1}_{M, X, Y}\colon M\oot (X \otimes Y) \to (M\oot X) \oot Y$, for all $M \in {} _A\C$, $X, Y \in \C$.

A $\C$-module functor $\M \to \M'$ between right $\C$-module categories $(\M, \oot)$ and $(\M', \oot')$ is a pair $(F, \zeta)$, where $F\colon \M\to \M'$ is a functor and $\zeta_{M, X}\colon F(M \oot X) \to F(M) \oot' X$ is a natural isomorphism satisfying
\begin{gather}\label{uno-z} (\zeta_{M, X} \otimes \id_Y ) \zeta_{M \oot X, Y} F(\mu_{M, X, Y}) = {\mu'}_{F(M), X, Y} \zeta_{M, X \otimes Y},\\
\label{dos-z} {r'}_{F(M)} \zeta_{M, \1} = F(r_M),
\end{gather}
for all $M \in \M$, $X, Y \in \C$.

Let $\M$ and $\M'$ be $\C$-module categories. An \emph{equivalence} of $\C$-module categories $\M \to \M'$ is a $\C$-module functor $(F, \zeta)\colon \M\to \M'$ such that $F$ is an equivalence of categories. If such an equivalence exists, $\M$ and $\M'$ are called \emph{equivalent $\C$-module categories}. A~$\C$-module category is called \emph{indecomposable} if it is not equivalent to a direct sum of two nontrivial $\mathcal{C}$-submodule categories.

Let $\M, \M'$ be indecomposable $\C$-module categories. Then $\operatorname{Fun}_\C(\M, \M)$ is a fusion category with tensor product given by composition of functors and the category $\operatorname{Fun}_\C(\M, \M')$ is an indecomposable module category over $\operatorname{Fun}_\C(\M, \M)$ in a natural way. If $A$ and $B$ are indecomposable algebras in $\C$ such that $\M \cong _A\!\C$ and $\M' \cong {}_B\C$, then $\operatorname{Fun}_\C(\M, \M)^{\rm op}$ is equivalent to the fusion category $_A\C_A$ of $(A, A)$-bimodules in $\C$ and there is an equivalence of $_A\C_A$-module categories $_B\C_A \cong \operatorname{Fun}_\C(\M, \M')$, where $_B\C_A$ is the category of $(B, A)$-bimodules in $\C$.

\subsection{}\label{action-mc} Let $\M$ be a $\C$-module category. Every tensor autoequivalence $\rho\colon \C \to \C$ induces a $\C$-module category structure $\M^\rho$ on $\M$ in the form $M \oot^\rho X = M\oot \rho(X)$, with associativity constraint
\begin{gather*}\mu^\rho_{M, X, Y} = \mu_{M, \rho(X) \otimes \rho(Y)} \big(\id_M \oot {\rho^2_{X, Y}}^{-1}\big)\colon \ M \oot \rho(X\otimes Y) \to (M\oot \rho(X)) \oot \rho(Y),\end{gather*} for all $M \in \M$, $X, Y \in \C$, where $\rho^2_{X, Y}\colon \rho(X) \otimes \rho(Y) \to \rho(X \otimes Y)$ is the monoidal structure of~$\rho$. See \cite[Section~3.2]{nikshych}.

Suppose that $A$ is an algebra in $\C$. Then $\rho(A)$ is an algebra in $\C$ with multiplication
\begin{gather*}m_{\rho(A)} = \rho(m_A) \rho^2_{A, A}\colon \ \rho(A) \otimes \rho(A) \to \rho(A).\end{gather*}
The functor $\rho$ induces an equivalence of $\C$-module categories $_{\rho(A)}\C \to (_A\C)^\rho$ with intertwining isomorphisms
\begin{gather*} {\rho^2_{M, X}}^{-1}\colon \ \rho(M \oot X) \to \rho(M) \oot^\rho X.\end{gather*}

\subsection{}\label{cgomega} Let $G$ be a f\/inite group.
Let $X$ be a $G$-module. Given an $n$-cochain $f \in C^n(G, X)$ (where $C^0(G, M) = M$), the coboundary of $f$ is the $(n+1)$-cochain $df = d^nf \in C^{n+1}(G, X)$ def\/ined by
\begin{gather*}d^nf(g_1, \dots, g_{n+1}) = g_1.f(g_2, \dots, g_{n+1}) + \sum_{i = 1}^{n}f(g_1, \dots, g_ig_{i+1}, \dots, g_n)\\
 \hphantom{d^nf(g_1, \dots, g_{n+1}) =}{} + (-1)^{n+1} f(g_1, \dots, g_n),\end{gather*}
for all $g_1, \dots, g_{n+1} \in G$. The kernel of $d^n$ is denoted $Z^n(G, M)$; an element of $Z^n(G, M)$ is an $n$-cocycle. We have $d^nd^{n-1} = 0$, for all $n \geq 1$. The $n$th cohomology group of $G$ with coef\/f\/icients in $M$ is $H^n(G, M) = Z^n(G, M)/d^{n-1}(C^{n-1}(G, M))$. We shall write $f \equiv f'$ when the cochains $f, f' \in C^n(G, k^\times)$ dif\/fer by a~coboundary.

We shall assume that every cochain $f$ is \emph{normalized}, that is, $f(g_1, \dots, g_n) = 1$, whenever one of the arguments $g_1, \dots, g_n$ is the identity. If $H$ is a subgroup of $G$ and $f \in C^n(H, k^\times)$, we shall indicate by~$f^g$ the $n$-cochain in $^{g^{-1}\!}H$ given by $f^g(h_1, \dots, h_n) = f({}^g h_1, \dots, {}^g h_n)$, $h_1, \dots, h_n \in H$.

Let $\omega \colon G \times G \times G \to k^\times$ be a 3-cocycle on $G$. Let $\C(G, \omega)$ denote the fusion category of f\/inite-dimensional $G$-graded vector spaces with associativity constraint def\/ined, for all $U, V, W\in \C(G, \omega)$, as
\begin{gather*}a_{X, Y, Z} ((u \otimes v)\otimes w) = \omega^{-1}(g_1, g_2, g_3) u \otimes (v\otimes w),\end{gather*}
for all homogeneous vectors $u \in U_{g_1}$, $v \in V_{g_2}$, $w \in W_{g_3}$, $g_1, g_2, g_3 \in G$. Any pointed fusion category is equivalent to a~category of the form $\C(G, \omega)$.

A fusion category $\C$ is called \emph{group-theoretical} if it is categorically Morita equivalent to a~pointed fusion category. Equivalently, $\C$ is group-theoretical if and only if there exist a f\/inite group $G$ and a $3$-cocycle $\omega\colon G \times G \times G \to k^{\times}$ such that~$\C$ is equivalent to the fusion category $\C(G, \omega, H, \psi) = _{A(H, \psi)\!}\C(G, \omega)_{A(H, \psi)}$, where $H$ is a subgroup of $G$ such that the class of $\omega\vert_{H\times H \times H}$ is trivial and $\psi\colon H \times H \to k^{\times}$ is a 2-cochain on~$H$ satisfying condition~\eqref{cond-alfa}.

Let $\C(G, \omega, H, \psi) \cong \C(G, \omega)^*_{\M_0(H, \psi)}$ be a group-theoretical fusion category. Then there is a bijective correspondence between equivalence classes of indecomposable $\C(G, \omega, H, \psi)$-module categories and equivalence classes of indecomposable $\C(G, \omega)$-module categories. This correspondence attaches to every indecomposable $\C(G, \omega)$-module category $\M$ the $\C(G, \omega, H, \psi)$-module category
\begin{gather*}\M(H, \psi) = \Fun_{\C(G, \omega)}(\M_0(H, \psi), \M). \end{gather*}

\section[Indecomposable module categories over $\C(G, \omega)$]{Indecomposable module categories over $\boldsymbol{\C(G, \omega)}$}\label{ptd-mc}

Throughout this section $G$ is a f\/inite group and $\omega \colon G \times G \times G \to k^\times$ is a 3-cocycle on $G$.

\subsection{}\label{adj-action} Let $g \in G$. Consider the 2-cochain $\cch_g\colon G \times G \to k^{\times}$ given by
\begin{gather*}\cch_g(g_1, g_2) = \frac{\omega({}^gg_1, {}^gg_2, g) \omega(g, g_1, g_2)}{\omega({}^gg_1, g, g_2)}.
\end{gather*}

For all $g \in G$ we have the relation
\begin{gather}\label{o-oy}d\cch_g = \frac{\omega}{\omega^g}.
\end{gather}

Let $\C = \C(G, \omega)$ and let $g \in G$. For every object $V$ of $\C$ let ${}^g V$ be the object of $\C$ such that $^gV = V$ as a vector space with $G$-grading def\/ined as $({}^gV)_x = V_{^g x}$, $x \in G$. For every $g \in G$, we have a functor $\ad_g \colon \C \to \C$, given by $\ad_g(V) = {}^gV$ and $\ad_g(f) = f$, for every object $V$ and morphism $f$ of $\C$.
Relation~\eqref{o-oy} implies that $\ad_g$ is a tensor functor with monoidal structure def\/ined by
\begin{gather*}\big(\ad_g^2\big)_{U, V}\colon \ {}^gU \otimes {}^gV \to {}^g (U \otimes V), \qquad \big(\ad_g^2\big)_{U, V} (u \otimes v) = \cch_g(h, h')^{-1} u\otimes v,\end{gather*}
for all $h, h' \in G$, and for all homogeneous vectors $u \in U_h$, $v \in V_{h'}$.

For every $g, g_1, g_2 \in G$, let $\gamma(g_1, g_2)\colon G \to k^\times$ be the map def\/ined in the form
\begin{gather*}
\gamma(g_1, g_2)(g) = \frac{\omega(g_1, g_2, g) \omega({}^{g_1g_2}g, g_1, g_2)}{\omega(g_1,{}^{g_2}g, g_2)}.
\end{gather*}
The following relation holds, for all $g_1, g_2 \in G$:
\begin{gather}\label{rel-omega}
\cch_{g_1g_2} = \cch_{g_1}^{g_2} \cch_{g_2} d \gamma(g_1, g_2) .
\end{gather}

In this way, $\ad\colon \underline G \to \underline{\text{Aut}}_\otimes\C$, $\ad(g) = \big(\ad_g, \ad_g^2\big)$, gives rise to an action by tensor autoequivalences of $G$ on $\C$ where, for every $g, x \in G$, $V \in \C(G, \omega)$, the monoidal isomorphisms ${\ad^2}_V\colon {}^g({}^{g'}V) \to {}^{gg'}V$ are given by
\begin{gather*}\ad^2_V(v) = \gamma(g, g') (x) v,\end{gather*} for all homogeneous vectors $v \in V_x$, $h \in G$. The equivariantization $\C^G$ with respect to this action is equivalent to the category of f\/inite-dimensional representations of the twisted quantum double $D^\omega G$ (see \cite[Lemma~6.3]{naidu}).

For each $g \in G$, and for each $\C$-module category $\M$, let $\M^g$ denote the module category induced by the functor $\ad_g$ as in Section~\ref{action-mc}. Recall that the action of $\C$ on $\M^g$ is def\/ined by $M\oot^g V = M\oot ({}^gV)$, for all objects~$V$ of~$\C$.

\begin{Lemma}\label{adj-triv} Let $g \in G$ and let $\M$ be a $\C$-module category. Then $\M^g \cong \M$ as $\C$-module categories.
\end{Lemma}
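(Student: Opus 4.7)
The plan is to construct an explicit $\C$-module equivalence $F \colon \M \to \M^g$ by twisting with the invertible simple object $\delta_g \in \C(G,\omega)$, i.e.\ the one-dimensional $G$-graded vector space concentrated in degree $g$. The underlying observation is that $\ad_g$ is inner in the following sense: for every $V \in \C$, the graded vector spaces $V \otimes \delta_g$ and $\delta_g \otimes {}^g V$ have identical homogeneous components---the degree-$x$ component of each is $V_{xg^{-1}}$---so there is a canonical isomorphism $\phi_V \colon V \otimes \delta_g \to \delta_g \otimes {}^g V$ in $\C$, defined on homogeneous vectors by $v \otimes 1 \mapsto 1 \otimes v$.

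On objects I would set $F(M) = M \oot \delta_g$, which is an equivalence of underlying $k$-linear abelian categories because $\delta_g$ is $\otimes$-invertible in $\C$ with inverse $\delta_{g^{-1}}$. The $\C$-module functor structure
\begin{gather*}
\zeta_{M,V} \colon (M \oot V) \oot \delta_g \longrightarrow (M \oot \delta_g) \oot ({}^g V)
\end{gather*}
is then defined as the composition
\begin{gather*}
(M \oot V) \oot \delta_g \xrightarrow{\mu^{-1}_{M,V,\delta_g}} M \oot (V \otimes \delta_g) \xrightarrow{\id_M \oot \phi_V} M \oot (\delta_g \otimes {}^g V) \xrightarrow{\mu_{M,\delta_g,{}^g V}} (M \oot \delta_g) \oot ({}^g V).
\end{gather*}

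It then remains to check the module-functor axioms \eqref{uno-z} and \eqref{dos-z}. The unit axiom \eqref{dos-z} is immediate because $\phi_{\1}$ is trivial and all cochains are normalized. For the associativity axiom \eqref{uno-z}, after expanding both sides along the several occurrences of $\mu^{\pm 1}$ and the isomorphism $(\ad_g^2)^{-1}_{V,W}$ appearing in $\mu^g$---which by the definition of $\ad_g^2$ given in Section~\ref{action-mc} contributes the scalar $\cch_g(\deg v, \deg w)$ on a homogeneous vector $v \otimes w$---the coherence reduces to an identity between products of values of $\omega$ and one value of $\cch_g$. After a minor rearrangement this identity is precisely relation~\eqref{o-oy}, $d\cch_g = \omega/\omega^g$; so it follows from the $3$-cocycle condition on $\omega$ together with the defining formula for $\cch_g$.

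The only real work is the verification of~\eqref{uno-z}, and the key point is conceptual rather than computational: relation~\eqref{o-oy} was built into the definition of the monoidal structure $\ad_g^2$ exactly so that $\ad_g$ becomes inner on the Picard groupoid of $\C$, implemented by conjugation by $\delta_g$. This in turn is the reason why the $G$-action by $\ad$ on $\C$ acts trivially on equivalence classes of module categories.
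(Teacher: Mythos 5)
Your proof is correct and is essentially the paper's own argument: the paper likewise realizes the equivalence by tensoring on the right with the invertible object $\{g\}=\delta_g$ (stated in the direction $\M^g\to\M$, with the structure isomorphism assembled from $\mu^{\pm 1}$ and the canonical identification of graded components), and leaves the coherence axioms to a direct computation. Your extra remark that the check for~\eqref{uno-z} boils down to $d\cch_g=\omega/\omega^g$ is exactly the mechanism the paper builds into the definition of $\ad_g^2$, so the two proofs coincide.
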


\begin{proof} For each $g \in G$, let $\{g\}$ denote the object of $\C$ such that $\{g\} = k$ with degree $g$. In what follows, by abuse of notation, we identify $\{g\} \otimes \{h\}$ and $\{gh\}$, $g, h \in G$, by means of the canonical isomorphisms of vector spaces.

Let $R_g\colon \M^g \to \M$ be the functor def\/ined by the right action of $\{g\}$: $R_g(M) = M \oot \{g\}$. Consider the natural isomorphism $\zeta\colon R_g \circ \oot^g \to \oot \circ (R_g \times \id_\C)$, def\/ined as
\begin{gather*}\zeta_{M, V} = \mu_{M, \{g\}, V} \mu^{-1}_{M, {}^g V, \{g\}}\colon \ R_g(M\oot^g V) \to R_g(M) \oot V,
\end{gather*} for all objects $M$ of $\M$ and $V$ of $\C$, where $\mu$ is the associativity constraint of~$\M$.

The functor $R_g$ is an equivalence of categories with quasi-inverse given by the functor $R_{g^{-1}}\colon$ $\M \to \M^g$.

A direct calculation, using the coherence conditions~\eqref{modcat1} and~\eqref{modcat2} for the module category~$\M$, shows that $\zeta$ satisf\/ies conditions~\eqref{uno-z} and~\eqref{dos-z}. Hence $(R_g, \zeta)$ is a $\C$-module functor. Therefore $\M^g \cong \M$ as $\C$-module categories, as claimed. \end{proof}

\begin{Lemma}\label{gdea} Let $H$ be a subgroup of $G$ and let $\psi$ be a $2$-cochain on $H$ satisfying~\eqref{cond-alfa}. Let $A(H, \psi)$ denote the corresponding indecomposable algebra in~$\C$. Then, for all $g \in G$, ${}^gA(H, \psi) \cong A({}^gH, \psi^{g^{-1}} \cch_{g^{-1}})$ as algebras in~$\C$.
\end{Lemma}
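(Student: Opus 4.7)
The plan is to unfold the definition of the tensor autoequivalence $\ad_g$ on the algebra object $A(H, \psi)$ and compare the resulting multiplication with the one defining $A({}^gH, \psi^{g^{-1}}\cch_{g^{-1}})$, so everything reduces to a direct identification of scalars on homogeneous components plus one cocycle sanity check.

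First I would describe the underlying graded object. Recall $A(H, \psi) = \bigoplus_{h\in H}\{h\}$ with multiplication $\{h\}\otimes\{h'\}\to\{hh'\}$ given by the scalar $\psi(h, h')$. Applying the functor $\ad_g$ reindexes degrees according to $({}^gV)_x = V_{{}^gx}$, so the summand originally in degree $h \in H$ now sits in the degree $x$ with ${}^g x = h$; the support of ${}^gA(H, \psi)$ is therefore ${}^gH$ and its underlying object is $\bigoplus_{x\in {}^gH}\{x\}$.

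Next I would compute the multiplication using the formula $m_{\ad_g(A)} = \ad_g(m_A)\circ(\ad_g^2)_{A,A}$ from Section~\ref{action-mc}. The structural isomorphism $(\ad_g^2)_{A,A}$ contributes a factor $\cch_g(\cdot, \cdot)^{-1}$ on the pair of degrees read in $A$ (that is, in $H\times H$), while $\ad_g(m_A)$ acts as $m_A$ on the vector space. Translating this back to degrees $x, x' \in {}^gH$ (so the corresponding degrees in $A$ are obtained by conjugating by $g^{-1}$), the resulting multiplication $\{x\}\otimes\{x'\}\to\{xx'\}$ in ${}^gA(H, \psi)$ is given by the scalar $\psi({}^{g^{-1}}x, {}^{g^{-1}}x')\,\cch_{g^{-1}}(x, x')$, once the $\cch_g$-factor is rewritten in terms of $\cch_{g^{-1}}$. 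This matches the multiplication defining $A({}^gH, \psi^{g^{-1}}\cch_{g^{-1}})$ on the nose, yielding the desired isomorphism of algebras in $\C$.

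To justify that the target algebra is well defined, I would check that $\psi^{g^{-1}}\cch_{g^{-1}}$ satisfies condition~\eqref{cond-alfa} on ${}^gH$, namely $d(\psi^{g^{-1}}\cch_{g^{-1}}) = \omega\vert_{{}^gH\times {}^gH\times {}^gH}$; this follows at once by combining $d\psi = \omega\vert_{H\times H\times H}$ with relation~\eqref{o-oy} applied to $g^{-1}$, after restriction to ${}^gH$. The main obstacle is the careful bookkeeping of conventions: keeping track of which direction the grading is transported by $\ad_g$, tracking precisely which arguments $\cch_g$ takes in the monoidal structure $(\ad_g^2)_{U, V}$, and, if a mismatch appears between the $\cch_g$ produced by the tensor structure and the $\cch_{g^{-1}}$ appearing in the stated formula, invoking the cocycle identity~\eqref{rel-omega} specialised to $(g, g^{-1})$ to absorb the discrepancy into a coboundary $d\gamma(g, g^{-1})$; since cohomologous twists yield isomorphic algebras, this suffices.
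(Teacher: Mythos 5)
Your proposal is correct and follows essentially the same route as the paper: the paper's (very terse) proof likewise reads off from the definition of $\ad_g$ and its monoidal structure that ${}^gA(H,\psi) = A\big({}^gH, \psi^{g^{-1}}\big(\cch_g^{g^{-1}}\big)^{-1}\big)$ and then invokes relation~\eqref{rel-omega} at the pair $(g, g^{-1})$ to see that $\big(\cch_g^{g^{-1}}\big)^{-1}$ and $\cch_{g^{-1}}$ differ by the coboundary $d\gamma(g,g^{-1})$ --- so the ``mismatch'' you anticipate in your last paragraph is genuinely there, and your proposed fix is exactly the one the paper uses. The only caveat is a bookkeeping slip: under the paper's literal grading convention $({}^gV)_x = V_{{}^gx}$, the relation ${}^gx = h$ with $h \in H$ places the support at ${}^{g^{-1}}H$ rather than ${}^gH$, but this traces back to an internal inconsistency in the paper's own conventions (the stated grading is the one for a right action, while $\ad$ is treated as a left action) and does not affect the substance of your argument.
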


\begin{proof} By def\/inition, ${}^gA(H, \psi) = A\big({}^gH, \psi^{g^{-1}} \big(\cch_{g}^{g^{-1}}\big)^{-1}\big)$. It follows from formula~\eqref{rel-omega} that $\big(\cch_{g}^{g^{-1}}\big)^{-1}$ and $\cch_{g^{-1}}$ dif\/fer by a coboundary. This implies the lemma. \end{proof}

\subsection{} Let $H$, $L$ be subgroups of $G$ and let $\psi \in C^2(H, k^\times)$, $\xi \in C^2(L, k^\times)$, be 2-cochains such that $\omega\vert_{H \times H \times H} = d\psi$ and $\omega\vert_{L \times L \times L} = d\xi$.

Let $B$ be an object of the category $_{A(H, \psi)}\C_{A(L, \xi)}$ of $(A(H, \psi), A(L, \xi))$-bimodules in $\C$. For each $z\in G$, let $\pi_l(h)\colon B_z \to B_{hz}$ and $\pi_r(s)\colon B_z \to B_{zs}$, denote the linear maps induced by the actions of $h \in H$ and $s \in L$, respectively. Then the following relations hold, for all $h, h' \in H$, $s, s' \in L$:
\begin{gather}\label{uno}\pi_l(h)\pi_l(h') = \omega(h, h', z) \psi(h, h') \pi_l(hh'), \\
\label{dos}\pi_r(s')\pi_r(s) = \omega(z, s, s')^{-1} \xi(s, s') \pi_r(ss'),\\
\label{tres}\pi_l(h)\pi_r(s) = \omega(h, z, s) \pi_r(s)\pi_l(h).\end{gather}

\begin{Lemma}\label{alfa-g} Let $g \in G$ and let $B_g$ denote the homogeneous component of degree $g$ of $B$. Then the map $\pi\colon H \cap {}^gL \to \text{\rm GL}(B_g)$, defined as $\pi(x) = \pi_r\big({}^{g^{-1}}x\big)^{-1}\pi_l(x)$ is a projective representation of $H \cap {}^gL$ with cocycle~$\alpha_g$ given, for all $x, y \in H \cap {}^gL$, as follows:
\begin{gather*}
\alpha_g(x, y) = \psi(x, y) \xi^{-1}\big({}^{g^{-1}}x, {}^{g^{-1}}y\big) \frac{\omega(x, y, g) \omega\big(x, yg, {}^{g^{-1}}\big(y^{-1}\big)\big)}
{\omega\big(xyg, {}^{g^{-1}}\big(y^{-1}\big),{} ^{g^{-1}}\big(x^{-1}\big)\big)} du_g(x, y) \\
\hphantom{\alpha_g(x, y) =}{} \times \frac{\omega\big({}^{g^{-1} } y, {}^{g^{-1}}\big(y^{-1}\big), {}^{g^{-1}}\big(x^{-1}\big)\big)} {\omega\big({}^{g^{-1}}x, {} ^{g^{-1}}y, {} ^{g^{-1}}\big(y^{-1}x^{-1}\big)\big)},
\end{gather*}
 where the $1$-cochain $u_g$ is defined as $u_g(x) = \omega\big(xg, {} ^{g^{-1}} x, {}^{g^{-1}} \big(x^{-1}\big)\big)$. \end{Lemma}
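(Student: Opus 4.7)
The proof is by direct computation on the homogeneous component $B_g$. Fix $x, y \in H \cap {}^gL$. Since $\pi_l(h)$ and $\pi_r(s)$ shift the grading by left/right multiplication, one checks that $\pi(x) = \pi_r({}^{g^{-1}}x)^{-1}\pi_l(x)$ indeed maps $B_g \to B_{xg} \to B_g$, so that $\pi$ takes values in $\GL(B_g)$. The plan is to expand $\pi(x)\pi(y)$ using the relations~\eqref{uno}, \eqref{dos}, \eqref{tres} until it is rewritten in the form $\alpha_g(x,y)\pi(xy)$, and then to read off the formula for $\alpha_g$. The fact that $\alpha_g$ is a $2$-cocycle on $H \cap {}^gL$ is then automatic from the associativity of composition, so no separate verification is needed.

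A preliminary identity is obtained from~\eqref{dos} applied with $s' = s^{-1}$ on $B_g$, together with the normalisation $\pi_r(e) = \id$:
\begin{gather*}
\pi_r(s)^{-1}\big\vert_{B_{gs}} = \omega\big(g, s, s^{-1}\big)\, \xi\big(s, s^{-1}\big)^{-1} \pi_r\big(s^{-1}\big)\big\vert_{B_{gs}},
\end{gather*}
valid for each $s \in L$. Applying this identity to both occurrences of $\pi_r(\cdot)^{-1}$ in $\pi(x)\pi(y)$ (with $s = {}^{g^{-1}}y$ on $B_{yg}$ and $s = {}^{g^{-1}}x$ on $B_{xg}$), one next uses~\eqref{tres} on $B_{yg}$ to commute $\pi_l(x)$ past $\pi_r({}^{g^{-1}}y^{-1})$; then~\eqref{uno} on $B_g$ merges $\pi_l(x)\pi_l(y)$ into $\psi(x,y)\,\omega(x,y,g)\,\pi_l(xy)$; then~\eqref{dos} on $B_{xyg}$ merges the two $\pi_r$-factors into $\pi_r({}^{g^{-1}}(xy)^{-1})$; and finally the preliminary identity, read in reverse, converts this into $\pi_r({}^{g^{-1}}(xy))^{-1}$. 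The composition of all these rearrangements is by construction $\alpha_g(x,y)\pi(xy)$.

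Each of the five applications above produces a scalar factor of $\omega$, $\psi$ or $\xi$ whose arguments depend on the current grading, which is why the middle argument of the $\omega$-factors varies through $g$, $yg$, $xg$ and $xyg$. Collecting and simplifying these factors yields the stated formula for $\alpha_g(x,y)$; in particular, the three $\omega$-terms of the form $\omega(\cdot g, {}^{g^{-1}}\cdot, {}^{g^{-1}}(\cdot^{-1}))$ arising from the preliminary identity combine precisely into $du_g(x,y)$, where $u_g(x) = \omega(xg, {}^{g^{-1}}x, {}^{g^{-1}}x^{-1})$. The main obstacle is not conceptual but one of bookkeeping: at every step one must record which homogeneous component is being acted on, since this determines the middle entry of each $\omega$-factor in~\eqref{uno}--\eqref{tres}, and one must then use the $3$-cocycle identity for $\omega$ a few times to reorganise the resulting product into the asymmetric form displayed in the statement.
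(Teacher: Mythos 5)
Your proposal is correct and follows essentially the same route as the paper: invert $\pi_r$ via relation~\eqref{dos} (your preliminary identity is the paper's, specialized to $z=g$), then expand $\pi(x)\pi(y)$ using~\eqref{uno}--\eqref{tres} and collect the scalar factors, with the $\xi$-terms at inverse arguments absorbed into $\xi^{-1}({}^{g^{-1}}x,{}^{g^{-1}}y)$, $du_g$ and $\omega$-ratios via the hypothesis $d\xi=\omega\vert_{L\times L\times L}$. The paper is if anything terser than you are, simply recording the inverse formula and the identity $\xi({h'}^{-1},h^{-1})\xi(h,h')=df(h,h')\,\omega(h',{h'}^{-1},h^{-1})/\omega(h,h',{h'}^{-1}h^{-1})$ before declaring the rest a straightforward computation.
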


\begin{proof} It follows from~\eqref{dos} that $\pi_r(s)^{-1} = \omega\big(z, s, s^{-1}\big) \xi\big(s, s^{-1}\big)^{-1} \pi_r\big(s^{-1}\big)$, for all $z\in G$, $s \in L$. In addition, for all $h, h' \in L$, we have the following relation:
\begin{gather*}\xi \big({h'}^{-1}, h^{-1}\big) \xi (h, h') = df(h, h')\frac{\omega\big(h', {h'}^{-1}, h^{-1}\big)}
	{\omega\big(h, h', {h'}^{-1}h^{-1}\big)},\end{gather*}
where $f$ is the 1-cochain given by $f(h) = \xi\big(h, h^{-1}\big)$. A straightforward computation, using this relation and conditions~\eqref{uno},~\eqref{dos} and~\eqref{tres}, shows that $\pi(x) \pi(y) = \alpha_g(x, y) \pi(xy)$, for all $x, y \in H \cap {}^gL$. This proves the lemma. \end{proof}

\begin{Remark}\label{rmk-alfag} Lemma \ref{alfa-g} is a~version of \cite[Proposition~3.2]{ostrik}, where it is shown that $B$ is a~simple object of~$_{A(H, \psi)}\C_{A(L, \xi)}$ if and only if~$B$ is supported on a single double coset $HgL$ and the projective representation~$\pi$ in the component~$B_g$ is irreducible.
\end{Remark}

For all $g \in G$, $\psi^g \cch_g$ is a 2-cochain in $^{g^{-1}\!}\!H$ such that $\omega\vert_{{}^{g^{-1}}H \times {}^{g^{-1}}H \times {}^{g^{-1}} H} = d(\psi^g \cch_g)$. Then the product $\xi^{-1} \psi^g \cch_g$ def\/ines a 2-cocycle of ${}^{g^{-1}} H \cap L$.

\begin{Lemma}\label{rel-cociclos} The class of the $2$-cocycle $\big(\xi^{-1} \psi^g \cch_g\big)^{g^{-1}}$ in $H^2(H\cap {}^gL, k^\times)$ coincides with the class of the $2$-cocycle $\alpha_g$ in Lemma~{\rm \ref{alfa-g}}.
\end{Lemma}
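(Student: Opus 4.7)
The plan is a direct cocycle computation. First, I would unravel the definition of $(\xi^{-1}\psi^g\Omega_g)^{g^{-1}}$ at a pair $(x,y) \in (H \cap {}^gL) \times (H \cap {}^gL)$. Since $\psi^g({}^{g^{-1}}x, {}^{g^{-1}}y) = \psi(x,y)$, and
\begin{gather*}
\Omega_g\bigl({}^{g^{-1}}x, {}^{g^{-1}}y\bigr) = \frac{\omega(x,y,g)\,\omega\bigl(g,{}^{g^{-1}}x,{}^{g^{-1}}y\bigr)}{\omega\bigl(x,g,{}^{g^{-1}}y\bigr)},
\end{gather*}
one immediately gets
\begin{gather*}
\bigl(\xi^{-1}\psi^g\Omega_g\bigr)^{g^{-1}}(x,y) = \psi(x,y)\,\xi^{-1}\bigl({}^{g^{-1}}x, {}^{g^{-1}}y\bigr)\,\frac{\omega(x,y,g)\,\omega\bigl(g,{}^{g^{-1}}x,{}^{g^{-1}}y\bigr)}{\omega\bigl(x,g,{}^{g^{-1}}y\bigr)}.
\end{gather*}
Comparing with the expression of $\alpha_g(x,y)$ in Lemma~\ref{alfa-g}, the common prefactor $\psi(x,y)\,\xi^{-1}({}^{g^{-1}}x, {}^{g^{-1}}y)$ cancels, so the proof reduces to a purely $\omega$-theoretic identity: the product of the six remaining $\omega$-values in $\alpha_g(x,y)$, multiplied by $du_g(x,y)$, must be cohomologous to $\omega(x,y,g)\,\omega(g,{}^{g^{-1}}x,{}^{g^{-1}}y)\,\omega(x,g,{}^{g^{-1}}y)^{-1}$ as a 2-cocycle on $H \cap {}^gL$.

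Second, I would verify this reduced identity by repeated applications of the 3-cocycle condition $d\omega = 1$ to suitable 4-tuples of elements of $G$. The natural candidates are the quadruples $(x, y, g, {}^{g^{-1}}y^{-1})$, $(xy, g, {}^{g^{-1}}y^{-1}, {}^{g^{-1}}x^{-1})$, and $({}^{g^{-1}}x, {}^{g^{-1}}y, {}^{g^{-1}}y^{-1}, {}^{g^{-1}}x^{-1})$, chosen precisely so that the $\omega$-factors $\omega(x, yg, {}^{g^{-1}}y^{-1})$, $\omega(xyg, {}^{g^{-1}}y^{-1}, {}^{g^{-1}}x^{-1})$, $\omega({}^{g^{-1}}y, {}^{g^{-1}}y^{-1}, {}^{g^{-1}}x^{-1})$, and $\omega({}^{g^{-1}}x, {}^{g^{-1}}y, {}^{g^{-1}}(y^{-1}x^{-1}))$ appearing in $\alpha_g$ get rewritten in terms of the three target factors and of values of the 1-cochain $u_g$. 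After telescoping, the residual $u_g$-type terms combine with the explicit $du_g(x,y)$ present in $\alpha_g$, leaving exactly the required right-hand side.

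Finally, the only genuine obstacle is bookkeeping: six $\omega$-values must be traded against three via several cocycle identities, and one has to organize the calculation so that the by-products collapse neatly onto $du_g$. No new ingredient beyond the normalized 3-cocycle identity and the 1-cochain $u_g$ already introduced in Lemma~\ref{alfa-g} is needed. Once the correct ordering of 4-tuples has been identified, the identity holds at the cochain level after grouping the coboundary terms, yielding the equality of classes in $H^2(H \cap {}^gL, k^\times)$ as stated.
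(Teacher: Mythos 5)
Your proposal is correct and follows essentially the same route as the paper: after cancelling the common factor $\psi\,\xi^{-1}$, the statement reduces to an identity among $\omega$-values that is verified by a direct computation with the normalized $3$-cocycle condition, modulo coboundaries of $1$-cochains. The only differences are cosmetic: the paper carries out the computation in the $g$-conjugated variables (comparing $\alpha_g^g$ with $\xi^{-1}\psi^g\cch_g$), and the residual coboundary involves, besides $du_g$, the extra $1$-cochain $\theta_g(x)=\omega\big(g,x,x^{-1}\big)^{-1}$ rather than only ``$u_g$-type'' terms --- harmless, since only cohomology classes are at stake.
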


\begin{proof}
A direct calculation shows that for all $x, y \in G$,
\begin{gather*}\frac{\omega\big(y, y^{-1}, x^{-1}\big)}
{\omega\big(x, y, y^{-1}x^{-1}\big)}
\frac{\omega\big({}^gx, {}^gy, g\big) \omega\big({}^gx, {}^gyg, {} y^{-1}\big)}
{\omega\big({}^gx ^gyg, {}y^{-1}, x^{-1}\big)}
= \cch_g(x, y) d\theta_g (x, y),\end{gather*}
where the 1-cochain $\theta_g$ is def\/ined as $\theta_g(x) = \omega\big(g, x, x^{-1}\big)^{-1}$. This implies that $\alpha_g^g \equiv \xi^{-1} \psi^g \cch_g$, as was to be proved.
\end{proof}

\subsection{}\label{demo} In this subsection we give a proof of the main result of this paper.

\begin{proof}[Proof of Theorem \ref{main}] Let $H, L$ be subgroups of $G$ and let $\psi \in C^2(H, k^{\times})$ and $\xi \in C^2(L, k^{\times})$ be 2-cochains satisfying condition~\eqref{cond-alfa}. Let $A(H, \psi)$, $A(L, \xi)$ be the associated algebras in $\C$ and let $\M_0(H, \psi)$, $\M_0(L, \xi)$ be the corresponding $\C$-module categories.

Let $\M = \M_0(L, \xi)$. For every $g \in G$, let $\M^g$ denote the module category induced by the autoequivalence $\ad_g\colon \C \to \C$.
The $\C$-module category $\M^g$ is equivalent to $_{^gA(L, \xi)}\C$. Hence, by Lemma~\ref{gdea}, $\M^g \cong \M_0\big({}^gL, \xi^{g^{-1}} \cch_{g^{-1}}\big)$.

Suppose that there exists an element $g \in G$ such that $H = {}^gL$ and the class of the cocycle $\xi^{-1}\psi^g\cch_g$ is trivial on $L$. Relation~\eqref{rel-omega} implies that $\cch_g^{g^{-1}} = \cch_{g^{-1}}^{-1}$, and thus the class of $\psi^{-1}\xi^{g^{-1}}\cch_{g^{-1}}$ is trivial on~$H$. Then $\psi = \xi^{g^{-1}}\cch_{g^{-1}} df$, for some 1-cochain $f \in C^1(H, k^{\times})$. Therefore $^gA(L, \xi) = A\big(H, \xi^{g^{-1}} \cch_{g^{-1}}\big) \cong A(H, \psi)$ as algebras in $\C$. Thus we obtain equivalences of $\C$-module categories
\begin{gather*}\M_0(L, \xi) \cong \M_0(L, \xi)^g \cong {} _{^gA(L, \xi)}\C \cong \M_0(H, \psi),\end{gather*}
where the f\/irst equivalence is deduced from Lemma~\ref{adj-triv}.

Conversely, suppose that $F\colon \M_0(L, \xi) \to \M_0(H, \psi)$ is an equivalence of $\C$-module categories. Recall that there is an equivalence
\begin{gather*}\label{equiv-fun-1} \Fun_\C\left(\M_0(L, \xi), \M_0(H, \psi)\right) \cong {} _{A(H, \psi)}\C_{A(L, \xi)}.\end{gather*}
Under this equivalence, the functor $F$ corresponds to an object $B$ of $_{A(H, \psi)}\C_{A(L, \xi)}$ such that there exists an object $B'$ of $_{A(L, \xi)}\C_{A(H, \psi)}$ satisfying
\begin{gather}\label{dim-b}B \otimes_{A(L, \xi)}B' \cong A(H, \psi),\end{gather} as $A(H, \psi)$-bimodules in $\C$, and
\begin{gather}\label{bbprime}B' \otimes_{A(H, \psi)}B \cong A(L, \xi), \end{gather} as $A(L, \xi)$-bimodules in $\C$.

Let $\FPdim_{A(H, \psi)}M$ denote the Frobenius--Perron dimension of an object $M$ of $_{A(H, \psi)}\C_{A(H, \psi)}$. Then we have
\begin{gather*}\dim M = \dim A(H, \psi) \FPdim_{A(H, \psi)}M = |H| \FPdim_{A(H, \psi)}M.\end{gather*}
Taking Frobenius--Perron dimensions in both sides of~\eqref{dim-b} and using this relation we obtain that $\dim \left(B \otimes_{A(L, \xi)}B'\right) = |H|$.

On the other hand, $\dim (B \otimes_{A(H, \psi)}B' ) = \frac{\dim B \dim B'}{\dim{A(L, \xi)}} = \frac{\dim B \dim B'}{|L|}$. Thus
\begin{gather}\label{dimbb'} \dim B \dim B' = |H| |L|. \end{gather}

Since $A(H, \psi)$ is an indecomposable algebra in $\C$, then it is a simple object of $_{A(H, \psi)}\C_{A(H, \psi)}$. Then~\eqref{bbprime} implies that $B$ is a simple object of $_{A(H, \psi)}\C_{A(L, \xi)}$ and $B'$ is a simple object of $_{A(L, \xi)}\C_{A(H, \psi)}$.

In view of \cite[Proposition~3.2]{ostrik}, the support of $B$ is a two sided $(H, L)$-double coset, that is, $B = \bigoplus_{(h,h') \in H \times L} B_{hgh'}$, where $g \in G$ is a representative of the double coset that supports~$B$. Moreover, the homogeneous component $B_g$ is an irreducible $\alpha_g$-projective representation of the group $^gL \cap H$, where the 2-cocycle $\alpha_g$ satisf\/ies~$\alpha_g \equiv \big(\xi^{-1}\psi^g \cch_g\big)^{g^{-1}}$; see Remark~\ref{rmk-alfag} and Lemmas~\ref{alfa-g} and~\ref{rel-cociclos}.

Notice that the actions of $h \in H$ and $h'\in L$ induce isomorphisms of vector spaces $B_{g} \cong B_{hg}$ and $B_{g} \cong B_{gh'}$. Hence
\begin{gather}\label{dimb}\dim B = |HgL| \dim B_g = \frac{|H| |L|}{|H \cap {} ^gL|} \dim B_g = [H: H \cap {}^gL] |L| \dim B_g.\end{gather}
In particular, $\dim B \geq |L|$. Reversing the roles of $H$ and $L$, the same argument implies that $\dim B' \geq |H|$. Combined with relations~\eqref{dimbb'} and~\eqref{dimb} this implies
\begin{gather*}|H| |L| = \dim B \dim B' \geq |H| [H : H \cap {}^gL] |L| \dim B_g.\end{gather*}
Hence $[H\colon H \cap {}^gL] \dim B_g = 1$, and therefore $[H\colon H \cap {}^gL] = 1$ and $\dim B_g = 1$. The f\/irst condition means that $H \subseteq {}^gL$, while the second condition implies that the class of $\alpha_g$ is trivial in $H^2(H \cap {}^gL, k^\times)$. Since the rank of $\M_0(H, \psi)$ equals the index $[G:H]$ and the rank of $\M_0(H, \xi)$ equals the index $[G:L]$, then $|H| = |L|$. Thus we get that $H = {}^gL$ and that the class of the 2-cocycle~\eqref{cond-equiv} is trivial in $H^2(L, k^\times)$. This f\/inishes the proof of the theorem.
 \end{proof}

\begin{Example}\label{kp}
Let $B_8$ be the 8-dimensional Kac Paljutkin Hopf algebra. The Hopf algebra $B_8$ f\/its into an exact sequence
\begin{gather*}
k \longrightarrow k^C \longrightarrow B_8 \longrightarrow kL \longrightarrow k,
\end{gather*}
where $C = \mathbb Z_2$ and $L = \mathbb Z_2 \times \mathbb Z_2$. See~\cite{masuoka-6-8}. This exact sequence gives rise to mutual actions by permutations
\begin{gather*}C \overset{\vartriangleleft}\longleftarrow C \times L
\overset{\vartriangleright}\longrightarrow L,\end{gather*}
and compatible cocycles $\tau\colon L \times L \to \big(k^C\big)^\times$, $\sigma\colon C \times C \to (k^L)^\times$, such that $B_8$ is isomorphic to the bicrossed product $kC {}^{\tau}\#_\sigma kL$. The data $\lhd$, $\rhd$, $\sigma$ and $\tau$ are explicitly determined in \cite[Proposition~3.11]{ma-contemp} as follows. Let $C = \langle x\colon x^2 = 1 \rangle$, $L = \langle z, t\colon z^2 = t^2 = ztz^{-1}t^{-1} = 1\rangle$. Then $\lhd\colon C\times L \to C$ is the trivial action of $L$ on $C$, $\rhd\colon C \times L \to L$ is the action def\/ined by $x \rhd z = z$ and $x\rhd t = zt$,
\begin{gather*}\tau_{x^n}\big(z^it^j, z^{i'}t^{j'}\big) = (-1)^{nji'},\end{gather*} for all $0\leq n, i, i', j, j' \leq 1$,
and
\begin{gather*}\sigma_{z^it^j}\big(x^n, x^{n'}\big) = (\sqrt{-1})^{j\big(\frac{n+n'-\langle n+n'\rangle}{2}\big)},\end{gather*}
for all $0 \leq i, j, n, n' \leq 1$, where $\langle n+n'\rangle$ denotes the remainder of $n+n'$ in the division by~$2$. Here we use the notation $\tau(a, a')(y) = : \tau_y(a, a')$ and, similarly, $\sigma(y, y')(a) = : \sigma_a(y, y')$, $a, a' \in L$, $y, y' \in C$.

In view of \cite[Theorem~3.3.5]{schauenburg} (see \cite[Proposition~4.3]{gttic}), the fusion category of f\/inite-dimen\-sio\-nal representations of $B_8^{\rm op} \cong B_8$ is equivalent to the category $\C(G, \omega, L, 1)$, where $G = L \rtimes C$ is the semidirect product with respect to the action~$\rhd$, and $\omega$ is the 3-cocycle arising from the pair $(\tau, \sigma)$ under one of the maps of the so-called \emph{Kac exact sequence} associated to the matched pair.

In this example $G$ is isomorphic to the dihedral group $D_8$ of order 8. The 3-cocycle $\omega$ is determined by the formula
\begin{gather}\label{kac}\omega\big(x^{n}z^{i}t^{j}, x^{n'}z^{i'}t^{j'}, x^{n''}z^{i''}t^{j''}\big) =
\tau_{x^n}\big(z^{i'}t^{j'}, x^{n'} \rhd z^{i''}t^{j''}\big) \sigma_{z^{i''}t^{j''}}\big(x^n, x^{n'}\big),\end{gather}
for all $0 \leq i, j, i', j', i'', j'', n, n', n'' \leq 1$.

Notice that $\omega\vert_{L \times L\times L} = 1$. Hence, for every 2-cocycle $\xi$ on $L$, the pair $(L, \xi)$ gives rise to an indecomposable $\C$-module category $\M(L, \xi)$.
Formula~\eqref{kac} implies that $\cch_x\vert_{L \times L}$ is given by
\begin{gather*}\cch_x\big(z^{i}t^{j}, z^{i'}t^{j'}\big) = (-1)^{ji'}, \qquad 0\leq i, i', j, j' \leq 1.\end{gather*}
Then $\cch_x$ is a 2-cocycle representing the unique nontrivial cohomology class in $H^2(L, k^\times)$. By Theorem~\ref{main}, for any 2-cocycle~$\xi$ on $L$, $\M_0(L, 1)$ and $\M_0(L, \xi)$ are equivalent as $\C(G, \omega)$-module categories, and therefore so are the corresponding $\C$-module categories $\M(L, 1)$ and $\M(L, \xi)$. This implies that indecomposable $\C$-module categories are in this example parameterized by conjugacy classes of subgroups of $D_8$ on which $\omega$ has trivial restriction, as claimed in \cite[Section~6.4]{MN}.
\end{Example}

\subsection*{Acknowledgements}

This research was partially supported by CONICET and SeCyT - Universidad Nacional de C\'ordoba, Argentina.

\pdfbookmark[1]{References}{ref}
\LastPageEnding

\end{document}